\def\E{{\mathbb E}}
\def\P{{\mathbb P}}
\def\R{{\mathbb R}}
\def\Z{{\mathbb Z}}
\def\N{{\mathbb N}}
\def\cS{{\mathcal S}}
\def\S{ {\mathcal S}}
\def\PZ{{\mathcal P}}
\newtheorem{theo}{Theorem}
\newtheorem{prop}{\indent Proposition}
\newtheorem{rem}{\indent Remark}
\title[Information transmission and criticality]{Information transmission and criticality in the contact process.}
\date{October 30, 2017}
\author{M. Cassandro, A. Galves, E. L\"ocherbach}
\address{M. Cassandro :  Gran Sasso Science Institute, 
L'Aquila,
Italy.}
\email{marzio.cassandro@gmail.com}
\address{A. Galves: Universidade de S\~ao Paulo, Instituto de Matem\'atica e Estat\'istica, 
S\~ao Paulo, Brazil}
\email{galves@usp.br}
\address{E. L\"ocherbach: Universit\'e de Cergy-Pontoise, 
AGM, CNRS-UMR 8088,
95000 Cergy-Pontoise,
France.}
\email{eva.loecherbach@u-cergy.fr}
\subjclass[2010]{60K35; 82B27}
\keywords{Contact process. Criticality. Information transmission. Duality and coupling.}
\begin{document}

\maketitle

\begin{abstract}
In the present paper, we study the relation between criticality and information transmission in the one-dimensional contact process with infection parameter $\lambda .$ We introduce a notion of {\it sensitivity} of the process to its initial condition and prove that it  increases not only for values of $\lambda < \lambda_c, $ the value of the critical parameter, but keeps increasing even after  $ \lambda_c  , $ before finally starting to decrease for  values of $\lambda  $ sufficiently above $\lambda_c.$ This provides a counterexample to the common belief that associates maximal information transmission to criticality.

\end{abstract}

\section{Introduction}
From swarms of birds to neuronal activity, an increasing number of recent papers claims that the ability of a complex system to transmit information is maximized at the {\it critical} point. Just to cite a few examples, \cite{grogo} advertise that in swarms of cooperative units such as bird flocks ``the information transfer is made possible by the nonlocal nature of
the criticality condition.'' See also \cite{Cavagna29062010} who claim that ``flocks behave as critical systems, poised to respond maximally to environmental perturbations.'' Similar ideas have emerged in neurobiology. 

It is reasonable to conjecture that these ideas originate under the influential work of Per Bak, see for instance  \cite{bak} where he devotes a whole chapter to the question {\it ``Why Should the Brain Be Critical? ''} \cite{BeggsPlenz} suggest the following answer.  {\it ``The fact that the critical state [...] maximized information transmission in these networks is consistent with an intuitive understanding of how a branching process would work in the context of a highly parallel network.
If the network were subcritical, an input signal would attenuate, causing most output units to be inactive, thus leaving little evidence of the input. If the network were supercritical, any input signal would eventually lead to most output units being active, again leaving little information as to what the input was.''} This interpretation echoes Per Bak's own answer given in \cite{bak}: {\it ``The brain must operate at the critical case where the information is just able to propagate''.} 

A preliminary step in this direction is to clarify what we mean by {\it information transmission} in a complex system and how to measure it. From a neuroscientific point of view, \cite{KinouchiCopelli} suggested the following answer to this question. They propose to use the notion of {\it dynamical range} borrowed from acoustics to measure the sensitivity of the system to external stimuli. More precisely, they consider a stochastic system of interacting neurons exposed to an external stimulus modeled by a Poisson point process. In their model, the graph of interactions is given by an undirected Erd\"os-R\'enyi random graph. For this model, the authors are able to precisely define the notion of criticality, using as relevant parameter the average branching ratio. In fact they show, by numerical simulations, that a critical parameter value exists such that the dynamical range increases monotonically below this parameter and decreases monotonically above. 

In the present paper, we formulate the problem of information transmission in the following way.  We study to which extent a system is able to discriminate between two different initial stimuli. We do this for the one-dimensional contact process which is probably the simplest non-trivial complex system one might think of. More precisely, we compare two coupled time evolutions starting from Bernoulli product measures on a finite set of points having different densities. We then define the {\it sensitivity} of the model with respect to the initial signal as the total variation distance between the two associated processes at a given single site, at some fixed time.  We  prove that the sensitivity of the system to the initial condition, as a function of $\lambda ,$ keeps increasing even after the critical point, before finally starting to decrease. This contradicts the common belief  that information transmission  is maximized at the critical point.

In Section \ref{sec:finaldiscussion}, we discuss our results in comparison with those, already present in the literature, where similar features are pointed out  by numerical simulations.

This paper is organized as follows. In Section \ref{sec:def}, we recall the definition of the one-dimensional contact process, we give the definition of our measure of sensitivity in \eqref{eq:sensitivitydiff} and state Theorem \ref{theo:1} which is our main result. The proofs are collected in Section \ref{sec:proof}. 

\section{Definitions and main results}\label{sec:def}
In the following, we briefly recall the definition of the one-dimensional contact process introduced in \cite{harris1974}. 
Let $X =\{ 0, 1\}^\Z $ and write $ \xi = (\xi (i) )_{i \in \Z } $ for the elements of $ X.$ The contact process on $\Z $  is the continuous time Markov process $ (\eta^\lambda_t (i) ,  i \in \Z , t \geq 0) ,$ 
taking values in $X$ having  generator  
\begin{equation}\label{eq:additive} 
L f (\xi) = \sum_{ i \in \Z}c(i, \xi) [  f ( \xi^i ) - f (\xi ) ]  ,
\end{equation}
for any cylinder function $f .$ In the above formula, $\xi^i $ the configuration defined by
 \begin{eqnarray*}
 \xi^i   (j)  &= &\xi (j)  , \mbox{ for all $ j \neq i ,$ }\\
\xi^i  (i)  &= & 1- \xi(i ) ,
\end{eqnarray*}
and  
$$
c(i, \xi) = \left\{ 
\begin{array}{ll}
1 & \mbox{ if } \xi ( i ) = 1 \\
\lambda \sum_{j = i \pm 1} \xi (j) & \mbox{ if } \xi ( i ) = 0 
\end{array}  \right\} .
$$
Here, $\lambda > 0 $ is a fixed constant. We shall write $ (\eta^{\lambda, \xi}_t , t \geq 0 ) $ for a version of the above process starting from $ \eta^{\lambda , \xi}_0 = \xi, $ for any fixed initial configuration $ \xi \in X.$

Recall that \cite{harris1974} proved that there exists a critical value $ \lambda_c$ with $ 0 < \lambda_c < \infty $ such that for all $\lambda <  \lambda_c, $ there is only one invariant measure, the Dirac-measure supported by the $0-$configuration, while for $ \lambda > \lambda_c, $ a second and non-trivial extremal invariant measure appears. This result was completed by \cite{Bez} who prove that at the critical point only one invariant measure, the trivial one, exists. In particular,  in the subcritical case,  starting from any initial configuration, the process converges to the zero-configuration, while in the supercritical case it converges to a convex combination of the two extremal invariant measures. In the supercritical case, the influence of the initial configuration appears in the weighting factor defining this mixture. 

Here, to measure the sensitivity to the initial condition we must adopt a non-asymptotic point of view and measure, at a given time $t,$ how well the system discriminates between two different initial states. The precise definition is as follows. We suppose that the system  
starts with a initial configuration with a given density $p$ within a finite subset $ \Lambda $ of $\Z .$ This process will be denoted $ (\eta_t^{\lambda, p, \Lambda }, t \geq 0 ) ,$ where $ \lambda $ is the intensity of the infection rate appearing in \eqref{eq:additive}. We suppose that $ \eta^{\lambda, p, \Lambda }_0 (  i ) = \eta (i )    $ for all $ i \in  \Lambda  , $ where $ \eta (i) ,  i \in \Lambda , $ are i.i.d.\ Bernoulli random variables with parameter $p.$ We also suppose that $ \eta^{\lambda, p ,\Lambda   }_0  (i) = 0 $ for all $ i \in \Lambda^c .$ 

We consider two intensities $ p < q $ and the associated processes $ (\eta_t^{\lambda, p, \Lambda } , t \geq 0 )  $ and $(\eta_t^{\lambda, q, \Lambda } , t \geq 0)  . $ We then define the {\it sensitivity} of the process with parameter $\lambda $ with respect to the initial condition by
\begin{equation}\label{eq:dynrange}
\S_{p,q, \Lambda} ( \lambda   , t ) :=    \inf_{ \P \in \Pi   } \E ( | \eta_t^{\lambda, p, \Lambda} (0)) -  \eta_t^{\lambda, q, \Lambda} ( 0) )| ) ,
\end{equation}
where $ t > 0 $ and where the infimum is taken over all possible couplings $\P $ of $\eta_t^{\lambda, p, \Lambda }(0) $ and $ \eta_t^{\lambda, q, \Lambda}  (0) .$ 
The quantity $\S_{p, q, \Lambda} ( \lambda , t) $ measures the minimal distance of the two processes in a given position (here, the position $0$) at time $t, $ under two initial Bernoulli configurations of density $ p $ and $q.$ 

In the following we choose $ \Lambda := \Lambda_r = \{ - r , r \} , $ for a fixed position $r \in \Z,$ and we pose for any $0 <  \lambda_1 < \lambda_2 < \infty    , $    
\begin{equation}\label{eq:sensitivitydiff}
\Delta_{p,q} (\lambda_1, \lambda_2, r, t  ) =   \S_{p,q, \Lambda_r} ( \lambda_2, t     ) - \S_{p,q, \Lambda_r} ( \lambda_1   , t  ) .
\end{equation} 
The quantity $\Delta_{p,q} (\lambda_1, \lambda_2, r, t  )$ measures the sensitivity variation with respect to increasing values of the intensity $ \lambda ,$ at time $t.$ We show that this sensitivity variation is non-decreasing for all  $\lambda < \lambda_c  $ and continues increasing even after $\lambda_c $ before finally being decreasing. This is the content of our main theorem that we present now.

\begin{theo}\label{theo:1}
For any fixed $q > p > \frac23  $  there exist $ \lambda_c < \lambda_1 (p ) \le  \lambda_2 ( p ) $ such that the following holds.

For $\lambda_1 < \lambda_2 < \lambda_1 (p ) , $  there exist $r^* = r^* (\lambda_1 , p, q) $ and  $ t^*= t^* (\lambda_1 , p, q, r^*) , $  such that for all $ t \geq t^*  $ and $r \geq r^* , $ 
$$ \Delta_{p,q} (\lambda_1, \lambda_2 ,r, t  ) > 0  .$$

For $ \lambda_2 > \lambda_1 > \lambda_2 (p ) , $ there exist $r^* = r^* (\lambda_1 , p, q) $ and  $ t^*= t^* (\lambda_1 , p, q, r^*) , $  such that for all $ t \geq t^*  $ and $r \geq r^* , $  
$$  \Delta_{p,q} (\lambda_1, \lambda_2 , r, t  ) <  0 .$$
\end{theo}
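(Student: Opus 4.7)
The plan is to reduce Theorem \ref{theo:1} to the monotonicity of a single-variable function $G(\lambda) := \theta(\lambda) \rho(\lambda)[2 - (p+q)\rho(\lambda)]$, where $\theta(\lambda)$ denotes the survival probability of the contact process started from a single seed and $\rho(\lambda) = \bar\nu_\lambda(\eta(0)=1)$ the density of the upper invariant measure.

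First I would derive an explicit formula for the sensitivity. The graphical construction and self-duality identify the dual set $A_t^\lambda$ from $(0,t)$ in distribution with $\eta_t^{\lambda, \{0\}}$, so a short inclusion-exclusion over the independent Bernoulli initial data yields
\begin{equation*}
\P(\eta_t^{\lambda, p, \L_r}(0) = 1) = 2 p\, \alpha_t(\lambda, r) - p^2\, P_{11}(\lambda, r, t),
\end{equation*}
with $\alpha_t(\lambda, r) := \P(r \in \eta_t^{\lambda, \{0\}})$ and $P_{11}(\lambda, r, t) := \P(\{-r, r\} \subset \eta_t^{\lambda, \{0\}})$. Since both $\eta_t^{\lambda, p, \L_r}(0)$ and $\eta_t^{\lambda, q, \L_r}(0)$ are Bernoulli, the infimum in \eqref{eq:dynrange} equals the total variation distance of the two laws, hence the difference of their means, giving
\begin{equation*}
\S_{p, q, \L_r}(\lambda, t) = (q-p)\bigl[2\, \alpha_t(\lambda, r) - (p+q)\, P_{11}(\lambda, r, t)\bigr].
\end{equation*}
For $\lambda > \lambda_c$, the Bezuidenhout-Grimmett complete convergence theorem combined with the exponential decay of correlations of $\bar\nu_\lambda$ gives, letting $t\to\infty$ and then $r\to\infty$, $\alpha_t \to \theta(\lambda)\rho(\lambda)$ and $P_{11}\to \theta(\lambda)\rho(\lambda)^2$, so $\S_{p, q, \L_r}(\lambda, t) \to (q-p)\, G(\lambda)$. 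For $\lambda \leq \lambda_c$ both quantities decay exponentially, with $P_{11} \ll \alpha_t$ for $r$ large, so already $\S(\lambda_2) - \S(\lambda_1) \approx 2(q-p)(\alpha_t(\lambda_2) - \alpha_t(\lambda_1)) > 0$ by the strict monotonicity of $\alpha_t$ in $\lambda$.

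The theorem then reduces to studying $G$. Since $\theta(\lambda_c) = \rho(\lambda_c) = 0$, both are continuous and strictly increasing on $[\lambda_c, \infty)$, and $G \approx 2\theta\rho$ near $\lambda_c^+$, the function $G$ is strictly increasing on some right neighborhood $(\lambda_c, \lambda_1(p))$, giving the first assertion. For the second I would use
\begin{equation*}
G'(\lambda) = \rho\,[2 - (p+q)\rho]\,\theta'(\lambda) + 2\theta\,[1 - (p+q)\rho]\,\rho'(\lambda).
\end{equation*}
As $\lambda\to\infty$, $\rho\to 1$, so the first bracket tends to $2-(p+q) > 0$ while the second tends to $-2(p+q-1) < 0$. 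The assumption $p > 2/3$, i.e.\ $p+q > 4/3$, is exactly the condition $2(p+q-1) > 2 - (p+q)$; combined with the standard branching-process asymptotics $1-\theta(\lambda),\, 1-\rho(\lambda) = O(1/\lambda)$ with $\theta'(\lambda)$ and $\rho'(\lambda)$ of the same order $O(1/\lambda^2)$, the negative term dominates the positive one for $\lambda$ large, giving $G'(\lambda) < 0$ for $\lambda \geq \lambda_2(p)$.

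Finally I would transfer this back to finite $r$ and $t$: having a strict gap $G(\lambda_2) - G(\lambda_1)$ (positive in the first case, negative in the second), quantitative versions of the complete convergence theorem and of the exponential mixing of $\bar\nu_\lambda$ allow the choice of $r^* = r^*(\lambda_1, p, q)$ and then $t^* = t^*(\lambda_1, p, q, r^*)$ for which the gap is reflected in $\S_{p, q, \L_r}(\lambda_i, t)$. The hard part will be the monotonicity of $G$ for large $\lambda$: converting the heuristic ``$\theta'$ and $\rho'$ are of comparable size as $\lambda \to \infty$ and $p > 2/3$ tilts the balance'' into a rigorous perturbative estimate requires sharp enough asymptotic control on $\theta(\lambda)$ and $\rho(\lambda)$, and the margin $2(p+q-1) - (2-(p+q)) > 0$ supplied by $p > 2/3$ is precisely what makes the negative contribution win.
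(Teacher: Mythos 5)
Your overall route is the same as the paper's: duality reduces everything to the law of $|\eta_t^{\lambda,0}\cap\Lambda_r|$, the optimal-coupling/total-variation identity gives an exact formula for the sensitivity (your $(q-p)\bigl[2\alpha_t-(p+q)P_{11}\bigr]$ is identical to the paper's $\E\bigl[f(|\eta_t^{\lambda,0}\cap\Lambda_r|)\bigr]$ with $f(x)=(1-p)^x-(1-q)^x$, since $f(1)=q-p$ and $f(2)=(q-p)(2-p-q)$), and complete convergence plus exponential mixing of $\nu_\lambda$ produce the same double limit, your $G(\lambda)$. The two monotonicity steps, however, each contain a gap.

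For the supercritical decrease you differentiate $G$ and balance $\theta'$ against $\rho'$ using claimed asymptotics $1-\theta,\,1-\rho=O(1/\lambda)$ and $\theta',\rho'=O(1/\lambda^2)$. This is both unavailable and unnecessary: $\varrho(\lambda)$ is only known to be continuous and non-decreasing, not differentiable, and no such derivative asymptotics are established. By self-duality $\theta=\rho=\varrho$ (the paper records $\nu_\lambda(\{B:B\cap\{r\}=1\})=\varrho(\lambda)$), so $G(\lambda)=g(\varrho(\lambda))$ with the cubic $g(u)=2u^2-(p+q)u^3$; since $g'(u)=u\bigl(4-3(p+q)u\bigr)<0$ for $u>4/(3(p+q))$ and this threshold is below $1$ exactly because $p+q>4/3$, the \emph{finite difference} $g(\varrho(\lambda_2))-g(\varrho(\lambda_1))$ is already nonpositive once $\varrho(\lambda_1)$ is close enough to $1$ --- no derivative of $\varrho$ is needed, and this is precisely the factorization by $\varrho(\lambda_2)-\varrho(\lambda_1)$ carried out in the paper's Step~2. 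The ``hard part'' you flag at the end is thus self-inflicted. For the subcritical/near-critical increase, your step ``$P_{11}\ll\alpha_t$, hence $\S(\lambda_2)-\S(\lambda_1)\approx 2(q-p)(\alpha_t(\lambda_2)-\alpha_t(\lambda_1))$'' does not follow: what must be controlled is the \emph{increment} $P_{11}(\lambda_2)-P_{11}(\lambda_1)$ relative to the increment $\alpha_t(\lambda_2)-\alpha_t(\lambda_1)$, and smallness of $P_{11}$ itself gives no such comparison of differences. The paper's device here is the second assertion of its Proposition~2: under the monotone coupling, $\{\eta_t^{\lambda_1,0}(r)=1\}$ is conditionally independent of $\{\eta_t^{\lambda_2,0}(-r)=1\}$ given $\{\eta_t^{\lambda_1,0}(-r)=0\}$, which lets the common factor $\P(A_t)=\alpha_t(\lambda_2)-\alpha_t(\lambda_1)>0$ be pulled out and reduces the sign question to a conditional probability involving $\lambda_1$ alone. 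Your sketch is missing this (or an equivalent correlation estimate), and it is the genuinely non-trivial ingredient of that step.
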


\section{Proof of Theorem \ref{theo:1}}\label{sec:proof} 
The proof of Theorem \ref{theo:1} uses the self-duality of the contact process. For the sake of completeness we recall here this property. We start by introducing some notation. First of all, in the following we will not distinguish between the configuration $ \eta^ \lambda_t $ of the contact process at time $t$ and the associated subset of $\Z $ given by  $S( \eta^\lambda_t)  = \{ i \in \Z : \eta^\lambda_t (i ) = 1 \};$ that is, depending on the context, we will interpret $ \eta^\lambda_t $ as element of $X $ or as element of $ \PZ ( \Z) ,$ the set of all subsets of $\Z.$ Moreover, for any subset $ A \subset \Z , $ we write $ (\eta_t^{ \lambda, A} , t \geq 0 ) $ for the contact process starting from the initial configuration $ \eta^{\lambda, A}_0  (i) = 1 $ if and only if $i \in A.$ We observe that if $A$ is finite, then $(\eta^{\lambda, A}_t , t \geq 0 ) $ is just a pure jump Markov process, taking values in the set of finite subsets of $\Z. $ If $A = \{ i \}, $ then we write simply $ \eta_t^i $ for $\eta_t^{\{i\}} .$

The duality property of the contact process can be stated as follows. For any finite subset $A\in \PZ ( \Z) $ and any initial configuration $ \xi \in X, $ 
\begin{equation}\label{eq:duality}
 \P [ \, \eta^{\lambda , \xi}_t \cap A \neq \emptyset ]  = \P [ \eta_t^{\lambda , A } \cap \xi \neq \emptyset  ]. 
\end{equation} 
For more on duality see \cite{association} and \cite{harris1976}. 
 
 Our proof of Theorem \ref{theo:1} relies on the following result.

\begin{prop}\label{prop:1}
Let $\Lambda \in \PZ ( \Z )   $ be a finite subset of $\Z $  and assume that $ \xi (i )  , \xi' (i)  , i \in \Lambda  , $ are i.i.d.\ Bernoulli random variables with parameter $p,$ $q, $ respectively, for $0<  p < q< 1  ,$ and that $\xi (i)  = \xi' (i) = 0$ for all $i \in \Lambda^c . $  Then for all $i \in \Z,$
$$ \inf_{ \P \in \Pi   } \E ( | \eta_t^{\lambda , \xi} (0) -  \eta_t^{\lambda , \xi' } ( 0) | ) \\
=  \E \left[ (1-p)^{|\eta_t^{ \lambda ,0}  \cap \Lambda |} - (1-q)^{|\eta_t^{\lambda, 0} \cap \Lambda |}\right],
$$
where $\Pi$ denotes all possible couplings $\P $ of $\eta_t^{\lambda , \xi} (0) $ and $ \eta_t^{\lambda , \xi' } (0)  .$
\end{prop}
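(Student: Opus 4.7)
The key elementary observation is that for any two $\{0,1\}$-valued random variables $X,Y$, the total variation distance between their laws equals $|\P(X=1)-\P(Y=1)|$, and this equals the infimum $\inf_{\P\in\Pi}\E|X-Y|$ over all couplings (attained by the maximal coupling). Applying this to $X=\eta_t^{\lambda,\xi}(0)$ and $Y=\eta_t^{\lambda,\xi'}(0)$, the quantity we have to compute is
\[
\bigl|\P(\eta_t^{\lambda,\xi}(0)=1)-\P(\eta_t^{\lambda,\xi'}(0)=1)\bigr|.
\]

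Next I use the self-duality \eqref{eq:duality} with $A=\{0\}$ to rewrite, for the configuration $\xi$,
\[
\P(\eta_t^{\lambda,\xi}(0)=1)=\P(\eta_t^{\lambda,\xi}\cap\{0\}\ne\emptyset)=\P(\eta_t^{\lambda,0}\cap\xi\ne\emptyset),
\]
and analogously for $\xi'$. The crucial point is that $\xi$ is independent of the dual process $\eta^{\lambda,0}_t$, since the latter can be constructed from a graphical representation that does not depend on the initial configuration of the forward process. Condition on $\eta_t^{\lambda,0}$; since $\xi(i)=0$ off $\Lambda$ and the coordinates $\xi(i)$, $i\in\Lambda$, are i.i.d.\ Bernoulli$(p)$,
\[
\P(\eta_t^{\lambda,0}\cap\xi\ne\emptyset\mid\eta_t^{\lambda,0})=1-(1-p)^{|\eta_t^{\lambda,0}\cap\Lambda|}.
\]
The same computation with $q$ in place of $p$ applies to $\xi'$.

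Taking expectations and subtracting yields
\[
\P(\eta_t^{\lambda,\xi'}(0)=1)-\P(\eta_t^{\lambda,\xi}(0)=1)=\E\!\left[(1-p)^{|\eta_t^{\lambda,0}\cap\Lambda|}-(1-q)^{|\eta_t^{\lambda,0}\cap\Lambda|}\right].
\]
Because $p<q$ forces $1-p>1-q\ge 0$, the integrand is pointwise nonnegative, so the right-hand side is already nonnegative; hence the absolute value is unnecessary and the displayed identity of the proposition follows.

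I do not foresee a genuine obstacle here: the only subtle point is the independence of the initial configuration from the dual process, which is standard from the graphical construction and is what makes the conditional computation with $(1-p)^{|\eta_t^{\lambda,0}\cap\Lambda|}$ legitimate; once this is granted, everything else is a one-line application of duality together with the elementary formula for total variation between Bernoulli marginals.
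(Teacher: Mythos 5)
Your proof is correct, and it reaches the identity by essentially the same two ingredients as the paper (self-duality plus the optimal-coupling characterization of total variation), but the upper-bound half is handled differently. The paper first fixes the maximal coupling of the initial configurations (so that $\xi \le \xi'$) and then runs the canonical monotone coupling of the two contact processes, so that $|\eta_t^{\lambda,\xi}(0)-\eta_t^{\lambda,\xi'}(0)|$ becomes the indicator of $\{\eta_t^{\lambda,\xi}(0)=0,\ \eta_t^{\lambda,\xi'}(0)=1\}$, whose probability is then computed by duality; the lower bound is the same total-variation inequality you use. You instead observe at the outset that for two $\{0,1\}$-valued random variables the infimum over couplings of $\E|X-Y|$ \emph{equals} $|\P(X=1)-\P(Y=1)|$, which collapses both bounds into one step and removes any need for the process-level monotone coupling or for the ordering $\xi\le\xi'$; all that remains is the duality computation of the two marginals and the sign check $(1-p)^n\ge(1-q)^n$ that lets you drop the absolute value, which you carry out. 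Your treatment of the random initial condition (apply duality for fixed $\xi$, then average, using that $\xi$ is independent of the dual process and supported on $\Lambda$) is exactly the computation the paper performs implicitly. The only thing your shortcut forgoes is the explicit monotone coupling itself, which the paper reuses later in the proof of Theorem \ref{theo:1}; for Proposition \ref{prop:1} taken in isolation, your argument is leaner and equally rigorous.
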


\begin{rem}
Notice that in our definition of sensitivity in \eqref{eq:dynrange} above, to make explicit the relationship with $p$ and $\Lambda , $ we wrote $   \eta_t^{\lambda , p, \Lambda  }  $ for $\eta_t^{\lambda , \xi } $  and $ \eta_t^{\lambda, q, \Lambda } $ for $\eta_t^{\lambda , \xi'} .$
\end{rem}

\begin{proof}
We take the maximal coupling of $ \xi $ and $\xi', $ that is, $ \xi (i)  \le \xi' (i) $ for all $i \in \Lambda  .$ Moreover, we use the canonical monotone coupling of  $ \eta^{\lambda , \xi} $ and $\eta^{\lambda , \xi' } ;$ that is, $ \eta_t^{\lambda , \xi }  \le \eta_t^{\lambda , \xi' } $ (in the sense of $ \eta_t^{\lambda , \xi } (i) \le \eta_t^{\lambda , \xi' } ( i )$ for all $i$) for all $ t \geq 0.$ Then
$$  | \eta_t^{\lambda , \xi} (0) -  \eta_t^{\lambda , \xi'}( 0) | = 1_{ \{ \eta_t^{\lambda,\xi } (0) = 0 , \eta_t^{\lambda , \xi'}( 0) = 1 \}}  .$$ 
We obtain by \eqref{eq:duality} and since $  \xi  \le \xi ' , $  
\begin{multline*}
 \P ( \eta_t^{\lambda , \xi } (0) = 0 , \eta_t^{\lambda ,\xi' }(0) = 1  ) = \P ( \xi  \cap  \eta^{ \lambda , 0 }_t  = \emptyset , \xi' \cap   \eta^{\lambda, 0}_t   \neq \emptyset   ) \\
 = \E [(1-p)^{|\eta^{\lambda, 0}_t   \cap \Lambda  | } - (1- q)^{|\eta_t^{ \lambda , 0 }   \cap \Lambda | }  ]. 
\end{multline*}
Therefore, 
$$
\inf_{ \P \in \Pi   } \E ( | \eta_t^{\lambda , \xi }(0) -  \eta_t^{\lambda, \xi' } ( 0) | )
 \le  \E [(1-p)^{|\eta^{\lambda, 0}_t   \cap \Lambda  | } - (1- q)^{|\eta_t^{ \lambda , 0}   \cap \Lambda | }  ].
$$

We now give a lower bound, following Lemma 6.1 of \cite{Galves2010}.  In the following, write for short $ \eta_t = \eta_t^{\lambda , \xi } $ and $ \eta_t' = \eta_t^{\lambda , \xi' }.$
We have
\begin{multline*}
\E |\eta_t (0) -  \eta_t'( 0) | = \P ( \eta_t  (0) \neq \eta'_t  (0)  ) \\
= \P ( \eta_t   (0) = 0,  \eta'_t  (0)  = 1 ) +  \P (  \eta_t (0)  = 1 ,  \eta'_t  (0)  = 0 ) .
\end{multline*}
This expression is minimized by the optimal coupling of $  \eta_t (0)$ and $ \eta'_t (0)$ given by 
$$  \P (  \eta'_t (0)= a,  \eta_t (0)  = a ) =  \P (  \eta'_t (0)= a ) \wedge \P( \eta_t (0)  = a ) $$
for $ a = 0, 1 ,$ 
$$ \P ( \eta_t  (0) = 0,  \eta'_t (0) = 1 )  = \P( \eta'_t  (0)  = 1 ) - \P (  \eta_t  (0) = 1,  \eta'_t(0)  = 1 ) $$
and 
$$\P ( \eta_t (0)= 1, \eta'_t (0)  = 0 ) = \P( \eta_t (0) = 1 ) - \P ( \eta_t (0)= 1, \eta'_t  (0) = 1 ) .$$
In this way, for any possible coupling, 
$$ \P ( \eta_t  (0) \neq \eta'_t (0)  )  \geq |  \P(  \eta_t  (0) = 1 ) - \P (  \eta'_t (0) = 1 ) | ,$$
which, due to \eqref{eq:duality}, equals
$$ |  \P(  \eta_t (0) = 1 ) - \P (  \eta'_t  (0) = 1 ) | = | \E [ 1 - ( 1-p)^{|\eta^{\lambda, 0}_t  \cap \Lambda |}] - \E [ 1 - ( 1-q)^{|\eta^{\lambda, 0}_t\cap \Lambda |}]| ,$$
implying the assertion.
\end{proof}

A second important ingredient for the proof of Theorem \ref{theo:1} is the  following monotone coupling construction of $ (\eta_t^{ \lambda_1 , A}  , t \geq 0) $ and  $(\eta_t^{\lambda_2 , A}, t \geq 0 )$ for $\lambda_1 < \lambda_2 ,$ where $A$ is some finite subset of $  \Z . $ 
We associate to each site $ i \in \Z$ five independent Poisson processes having jump times $ ( T_n^{ i , \dag })_n  $ with rate $ 1,  $ $ (T_n^{ i \to  i +1  })_n   $ with rate $ \lambda_1 , $ $ (T_n^{ i \to  i -1  })_n   $ with rate $ \lambda_1 , $ $ (S_n^{ i \to  i +1  })_n   $ with rate $ \lambda_2 - \lambda_1  $ and finally  $ (S_n^{ i \to  i -1  })_n   $ with rate $ \lambda_2 - \lambda_1 . $ We assume that the processes attached to different sites are all independent. We then construct $ \eta^{ \lambda_1 , A}  $ and  $\eta^{\lambda_2 , 0}$ in the following way. Firstly, both processes start from the same initial configuration $A$ at time $0.$ Then we update the configurations according to the following rules.
 
\begin{itemize}
\item
Every time that $T_n^{i, \dag } $ rings, both processes simultaneously upgrade the value  at site $i$ to $0.$
\item
Every time that $ T_n^{ i \to  i +1  } $ rings, both processes simultaneously try to upgrade the position at site $i+1$ to $1, $ provided that at site $i$ or at site $ i+1$ there is a symbol $1.$ 
\item
Every time that $ T_n^{ i \to  i - 1  } $ rings, both processes simultaneously try to upgrade the position at site $i- 1$ to $1, $ provided that at site $i$ or at site $ i-1$ there is a symbol $1.$  
\item
Every time that $ S_n^{ i \to  i +1  } $ rings, only the process $\eta^{\lambda_2 , A}$ tries to upgrade the position at site $i+1$ to $1, $ provided that at site $i$ or at site $ i+1$ there is a symbol $1.$ 
\item
Every time that $ S_n^{ i \to  i -1  } $ rings, only the process $\eta^{\lambda_2 , A}$ tries to upgrade the position at site $i-1$ to $1, $ provided that at site $i$ or at site $ i-1$ there is a symbol $1.$ 
\end{itemize}

With this construction, we obtain the following proposition. 

\begin{prop}\label{prop:coup}
For the above coupled construction of $ ( \eta^{\lambda_1 , A }_t , t \geq 0 ) $ and $ ( \eta^{\lambda_2 , A }_t , t \geq 0 ) ,$ the following holds. 
\begin{itemize}
\item
$ \eta^{ \lambda_1, A }_t \le \eta^{\lambda_2, A }_t$ for all $ t \geq 0 .$
\item
For any fixed site $ r \in \Z, $ $ \{ \eta^{\lambda_1, A}_t ( r) = 1 \} $ is conditionally independent of $\{  \eta^{\lambda_2, A}_t ( -r) = 1\} ,$ conditionally on $  \{ \eta^{\lambda_1, A}_t ( -r) = 0 \}.$ 
\end{itemize}
\end{prop}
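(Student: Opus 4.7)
The proof splits into two independent arguments. For the first claim, $\eta^{\lambda_1,A}_t \le \eta^{\lambda_2,A}_t$, the plan is the standard induction on the jump times of the superposition of all the Poisson processes driving the coupled graphical construction (carried out first in a finite spatial window, then extended to $\Z$ by a monotone limit). At time $0$ both configurations equal $A$; at each subsequent jump one checks case-by-case that the inequality is preserved. A $T_n^{i,\dagger}$-ring resets both values at site $i$ to $0$. A $T_n^{i\to i\pm 1}$-ring simultaneously attempts the same upgrade in both processes, and since by induction any $1$ in the $\lambda_1$-configuration is also a $1$ in the $\lambda_2$-configuration, any upgrade that fires for $\lambda_1$ also fires for $\lambda_2$. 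An $S_n^{i\to i\pm 1}$-ring acts only on the $\lambda_2$-process and can only flip $0$ to $1$.

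For the second (conditional independence) claim the natural tool is self-duality \eqref{eq:duality}. I would introduce the backward dual contact processes $\zeta^{\lambda_1,r}$, $\zeta^{\lambda_1,-r}$, $\zeta^{\lambda_2,-r}$, started at $r$, $-r$, $-r$ at time $t$ and run backward for time $t$ along the time-reversed graphical construction. Writing
\begin{align*}
E_1 &:= \{\eta^{\lambda_1,A}_t(r) = 1\} = \{\zeta^{\lambda_1,r}_t\cap A\neq\emptyset\}, \\
E_3 &:= \{\eta^{\lambda_1,A}_t(-r) = 0\} = \{\zeta^{\lambda_1,-r}_t\cap A = \emptyset\}, \\
E_2 &:= \{\eta^{\lambda_2,A}_t(-r) = 1\} = \{\zeta^{\lambda_2,-r}_t\cap A\neq\emptyset\},
\end{align*}
the plan is to split the driving Poisson fields into the $(T,\dagger)$-part, which determines both $\lambda_1$-duals and hence both $E_1$ and $E_3$, and the independent $S$-arrow part, which together with $(T,\dagger)$ drives the $\lambda_2$-dual and hence $E_2$. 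On $E_3$ any $\lambda_2$-path from $A$ to $(-r,t)$ must use at least one $S$-arrow (otherwise it would be a $\lambda_1$-path contradicting $E_3$), so $E_2\cap E_3$ is governed by the fresh $S$-Poisson field, which is independent of the $(T,\dagger)$-randomness determining $E_1$.

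The step I expect to be the main obstacle is making this last independence precise. The $S$-arrows carrying $\zeta^{\lambda_2,-r}$ through to $A$ a priori live in space-time regions whose $(T,\dagger)$-configuration can also influence $E_1$, so showing that $\P(E_2\mid\sigma(T,\dagger),E_3)$ does not depend on the part of $\sigma(T,\dagger)$ determining $E_1$ is delicate. The cleanest route I can see is to expose only the $\lambda_1$-dual trace from $-r$, verify that on $E_3$ it is disjoint from $A\times\{0\}$, and then argue that the as-yet-unrevealed remainder of the graphical construction splits into two independent pieces: one supporting $\zeta^{\lambda_1,r}$ (hence $E_1$), the other supporting the $S$-arrow enhancement that carries $\zeta^{\lambda_2,-r}$ to $A$ (hence $E_2$). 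Making this decomposition rigorous --- and in particular verifying that the conditioning on $E_3$ genuinely decouples the two halves of the graphical randomness relevant for $E_1$ and $E_2$ --- is the technical heart of the proof.
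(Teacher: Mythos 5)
Your argument for the first bullet (induction over the jump times of the superposed Poisson processes, checking case by case that each type of mark preserves the pointwise order) is the standard one and is correct. Note that the paper states Proposition \ref{prop:coup} without any proof at all, treating it as immediate from the construction, so on that side there is nothing to compare against.

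For the second bullet your proposal stops exactly where the difficulty begins, and the gap you yourself flag is, as far as I can see, not closable. Write $E_1=\{\eta^{\lambda_1,A}_t(r)=1\}$, $E_2=\{\eta^{\lambda_2,A}_t(-r)=1\}$, $E_3=\{\eta^{\lambda_1,A}_t(-r)=0\}$. Since $E_1,E_3$ are measurable with respect to the $(T,\dagger)$--field and the $S$--field is independent of it, the claimed conditional independence is equivalent to $1_{E_1}$ and $g:=\P\bigl(E_2\mid\sigma(T,\dagger)\bigr)$ being uncorrelated under $\P(\cdot\mid E_3)$. But $g$ is a genuinely non-constant functional of the $(T,\dagger)$--marks: a $\lambda_2$--path from $A$ to $(-r,t)$ uses $T$--arrows and must avoid daggers, not only $S$--arrows, so your correct observation that such a path must contain at least one $S$--arrow on $E_3$ does not make $E_2\cap E_3$ a function of the $S$--field alone. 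Moreover $g$ shares randomness with $E_1$: already for $A=\{0\}$, $r=1$ and small $t$, both $\P(E_1\mid D)$ and $\P(E_2\mid E_3,D)$ are increasing in the first death time $D$ at the origin, which by the association inequality makes $E_1$ and $E_2$ strictly positively correlated given $E_3$. So the exact statement appears to be false at fixed $r$ and $t$, and no exposure/decomposition argument of the kind you sketch can decouple the two dual traces, since they traverse overlapping regions of the same $(T,\dagger)$--field. What the construction (together with \eqref{eq:expdecay} or finite speed of propagation) does give, and what the application in the proof of Theorem \ref{theo:1} actually requires since $r\to\infty$ and $t\to\infty$ there, is an asymptotic decorrelation of the two events as $r\to\infty$; if you want a rigorous proof you should aim at that weaker statement rather than at the exact identity.
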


Finally, we will rely on the following well-known result. We define 
$$ \varrho ( \lambda ) = \P (  \eta_t^{ \lambda, 0 } \mbox{ survives forever } ) .$$ 

\begin{theo}[Theorems 1.6 and  2.28 in Chapter VI of \cite{Liggett1985}]\label{theo:liggett} 
The following properties hold.
\begin{itemize}
\item 
$ \varrho ( \lambda ) = 0 $ for $\lambda \le \lambda_c, $ and $\varrho (\lambda ) > 0 $ for all $ \lambda  > \lambda_c .$
\item
The function $ \varrho ( \lambda ) $ is continuous and non-decreasing   in $\lambda  ,$ and $\varrho ( \lambda  ) \uparrow 1$ as $\lambda \uparrow \infty .$ 
\item
For all $\lambda > \lambda_c , $ there exists a unique probability measure $\nu_{\lambda} $ on $ \PZ ( \Z ) $ such that for any cylinder function $f, $
$$ \lim_{t \to \infty } \E ( f ( \eta_t^{ \lambda, 0} ) ) = ( 1 - \varrho ( \lambda  ) ) f( 0)  + \varrho ( \lambda) \int f d \nu_{ \lambda} ,$$
where $ 0$ denotes the configuration $ \xi \equiv 0 $ ``all-zero''.
\item 
The measure $ \nu_{\lambda } $ has exponentially decaying correlations, that is, there exist $ C , \varepsilon  > 0 $ with the following property. For all cylinder functions $f_1$ and $f_2 $ such that $ f_1 ( B ) $ depends only on $ B \cap R_1 $ and $f_2 (B) $ only on $ B \cap R_2,$  for some fixed $ R_1, R_2$ which are finite subsets of $  \Z  , $ 
\begin{equation}\label{eq:expdecay}
| \nu_{\lambda} ( fg) - (\nu_{\lambda} f) (\nu_{\lambda}  g)|  \le C e^{ - \varepsilon \, dist (R_1 , R_2) } .
\end{equation} 
\end{itemize}
\end{theo}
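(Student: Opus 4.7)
Since this theorem is cited verbatim from Liggett's monograph, any proof here would simply reconstruct the classical contact-process theory. I would organise the argument into four short steps matching the four bullets, leaning throughout on the graphical construction and on the coupling of Proposition~\ref{prop:coup}.

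\emph{Nontriviality of $\lambda_c$ and $\varrho(\lambda_c)=0$.} The equality $\varrho(\lambda)=0$ for $\lambda\le\lambda_c$ is tautological once one defines $\lambda_c:=\sup\{\lambda:\varrho(\lambda)=0\}$; the real content is $0<\lambda_c<\infty$ together with $\varrho(\lambda_c)=0$. The bound $\lambda_c\ge 1/2$ comes from dominating $\eta^{\lambda,0}_t$ above by a continuous-time branching random walk with birth rate $2\lambda$ and death rate $1$, which is subcritical when $2\lambda\le 1$. The bound $\lambda_c<\infty$ comes from a block construction: for $\lambda$ large, the restriction of $\eta^{\lambda,0}$ to a sufficiently coarse space--time grid stochastically dominates a supercritical $1$-dependent oriented percolation. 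The hardest part, $\varrho(\lambda_c)=0$, is the Bezuidenhout--Grimmett theorem, proved by a renormalization argument showing that whenever $\varrho(\lambda)>0$ the process dominates supercritical oriented percolation at a large enough scale, and this domination is robust under a small decrease of $\lambda$.

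\emph{Monotonicity, continuity and the large-$\lambda$ limit.} Monotonicity of $\varrho$ is immediate from Proposition~\ref{prop:coup}: survival for $\lambda_1$ forces survival for every $\lambda_2\ge\lambda_1$. Right-continuity of $\varrho$ follows from applying monotone convergence to the decreasing family of events $\{\eta^{\lambda,0}_t\neq\emptyset\}$ realized jointly through the graphical construction; left-continuity is a further consequence of the Bezuidenhout--Grimmett block argument. Finally $\varrho(\lambda)\uparrow 1$ as $\lambda\uparrow\infty$ follows from a lower comparison with a branching random walk whose extinction probability tends to $0$.

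\emph{Complete convergence and exponential decay.} For $\lambda>\lambda_c$, define $\nu_\lambda$ as the $t\to\infty$ limit of the contact process started from the configuration identically equal to $1$; this limit exists by attractiveness (the law is non-increasing in $t$). By self-duality~\eqref{eq:duality},
$$\P(\eta^{\lambda,0}_t\cap A\neq\emptyset)=\P(\eta^{\lambda,A}_t\ni 0),$$
and decomposing the RHS according to survival vs.\ extinction of $\eta^{\lambda,A}$, together with the shape theorem telling us that on the survival event the process locally looks like $\nu_\lambda$, yields the mixture $(1-\varrho(\lambda))\delta_\emptyset+\varrho(\lambda)\nu_\lambda$ as weak limit. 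Exponential decay of correlations for $\nu_\lambda$ then follows from exponential edge-speed estimates in the supercritical phase: coupling two copies of $\nu_\lambda$ through the graphical construction in finite windows, one checks that the influence of the configuration in $R_1$ on $R_2$ decays like $e^{-\varepsilon\,\mathrm{dist}(R_1,R_2)}$. The genuine obstacle throughout is the Bezuidenhout--Grimmett block argument that underlies $\varrho(\lambda_c)=0$ and left-continuity; given that input, all remaining items reduce to standard attractiveness-and-duality manipulations.
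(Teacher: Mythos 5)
The paper does not actually prove this statement: it is imported wholesale from Liggett's monograph (Theorems 1.6 and 2.28 of Chapter VI), and the only item the authors justify themselves is the monotonicity of $\varrho(\lambda)$, which they observe follows from the coupling of Proposition \ref{prop:coup}. Your proposal correctly identifies this situation and your roadmap of the classical theory (branching upper bound for $\lambda_c>0$, block construction for $\lambda_c<\infty$, Bezuidenhout--Grimmett for $\varrho(\lambda_c)=0$ and for left-continuity, upper semicontinuity of the decreasing limit $\lim_t\P(\eta^{\lambda,0}_t\neq\emptyset)$ for right-continuity, complete convergence via duality, and the supercritical correlation estimates) is essentially the standard one and is consistent with what the paper relies on.

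One concrete slip: you claim that $\varrho(\lambda)\uparrow 1$ "follows from a lower comparison with a branching random walk whose extinction probability tends to $0$." The comparison with the branching random walk goes the other way: because births onto already occupied sites are suppressed, the contact process is stochastically dominated \emph{from above} by the branching random walk, which is exactly why you could use it earlier to get $\lambda_c\ge 1/2$. It therefore yields an upper bound on $\varrho(\lambda)$, not a lower one, and cannot give $\varrho(\lambda)\to 1$. The standard lower bounds on the survival probability for large $\lambda$ come instead from a comparison with supercritical oriented percolation on a coarse space--time grid (the same block construction you invoke for $\lambda_c<\infty$, which gives a survival probability tending to $1$ as the percolation parameter does), or from explicit edge-process estimates in Chapter VI of \cite{Liggett1985}. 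Since the theorem is cited rather than proved in the paper, this does not affect anything downstream, but the directional error should be corrected if the sketch is to stand as an actual argument.
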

The monotonicity of $\varrho ( \lambda )$ follows from the  construction presented in Proposition \ref{prop:coup} above. For the remaining results, we refer the interested reader to \cite{Liggett1985} for a proof and references. 

We are now able to prove Theorem \ref{theo:1}.

\begin{proof}[Proof of Theorem \ref{theo:1}]

{\bf Step 1.} We start with the case $\lambda_1 < \lambda_2 < \lambda_c.$

Define $ f(x) = (1-p)^x - (1-q) ^x , $ for any $x \in \N.$ We rely on the coupled construction of $ \eta^{ \lambda_1, 0 }_t $ and $ \eta^{\lambda_2, 0 }_t$ of Proposition \ref{prop:coup} above. By Proposition \ref{prop:1}, we may therefore write
$$\Delta_{p,q} (\lambda_1, \lambda_2, r, t  ) = \E_{\lambda_1, \lambda_2 }  \left( f(  | \eta^{\lambda_2, 0}_t  \cap \Lambda_r |) - f( | \eta^{\lambda_1 , 0}_t \cap \Lambda_r |) \right) ,  
$$
where $ \E_{\lambda_1, \lambda_2 }  $ denotes the expectation with respect to this monotone coupling of $\eta^{ \lambda_1 , 0}_t $ and $ \eta^{\lambda_2 , 0}_t.$ Then, by the monotonicity and since $f(0)=0,$ 
\begin{multline*}
\Delta_{p,q} (\lambda_1, \lambda_2, r, t  )  = [ f(2) - f(1) ] \P_{\lambda_1, \lambda_2 } [ | \eta_t^{\lambda_2 , 0 } \cap \Lambda_r | = 2 , | \eta_t^{\lambda_1 , 0 } \cap \Lambda_r |= 1 ] \\
+ f(2) \P_{\lambda_1, \lambda_2 } [ | \eta_t^{\lambda_2 , 0 } \cap \Lambda_r | = 2 , | \eta_t^{\lambda_1 , 0 } \cap \Lambda_r |= 0  ]\\
+ f(1)   \P_{\lambda_1, \lambda_2 } [ | \eta_t^{\lambda_2 , 0 } \cap \Lambda_r | = 1 , | \eta_t^{\lambda_1 , 0 } \cap \Lambda_r |= 0  ].
\end{multline*}
Notice that $ f(2) - f(1) = (q-p) ( 1 - p -q ) < 0 ,$ since by assumption, $ 1/2 < p \le q .$ 
Therefore, 
\begin{multline*}
\Delta_{p,q} (\lambda_1, \lambda_2,r, t  )\geq [ f(2) - f(1) ] \P_{\lambda_1, \lambda_2 } [ | \eta_t^{\lambda_2 , 0 } \cap \Lambda_r | = 2 , | \eta_t^{\lambda_1 , 0 } \cap \Lambda_r |= 1 ] \\
+ f(2) \P_{\lambda_1, \lambda_2 } [ | \eta_t^{\lambda_2 , 0 } \cap \Lambda_r | \neq  0 , | \eta_t^{\lambda_1 , 0 } \cap \Lambda_r |= 0  ] .
\end{multline*}
By symmetry, 
\begin{multline*}
 \P_{\lambda_1, \lambda_2 } [ | \eta_t^{\lambda_2 , 0 } \cap \Lambda_r | = 2 , | \eta_t^{\lambda_1 , 0 } \cap \Lambda_r |= 1 ] \\
= 
2 \P_{\lambda_1, \lambda_2 } [ \eta^{\lambda_2 , 0 }_t (\pm r ) = 1 ,   \eta^{\lambda_1 , 0 }_t ( -r) = 0,   \eta^{\lambda_1 , 0 }_t ( r)= 1 ],
\end{multline*}
and 
\begin{multline*}
\P_{\lambda_1, \lambda_2 } [ | \eta_t^{\lambda_2 , 0 } \cap \Lambda_r | \neq  0 , | \eta_t^{\lambda_1 , 0 } \cap \Lambda_r |= 0  ]\\
\le 2 \P_{\lambda_1, \lambda_2 } [ \eta^{\lambda_2 , 0 }_t (- r ) = 1 ,   \eta^{\lambda_1 , 0 }_t ( \pm r) = 0 ] .
\end{multline*}

We put $A_t := \{ \eta^{\lambda_2 , 0 }_t (- r ) = 1 ,   \eta^{\lambda_1 , 0 }_t (-r) = 0 \} . $ By monotonicity we obtain
\begin{multline*}
\Delta_{p,q} (\lambda_1, \lambda_2,r, t  ) \geq 2 \P_{\lambda_1, \lambda_2 } ( A_t) \Big(  [ f(2) - f(1) ] \P_{\lambda_1, \lambda_2 } ( \eta^{\lambda_1, 0}_t ( r) = 1 | A_t ) \\
+ f(2) \P_{\lambda_1, \lambda_2 } ( \eta^{ \lambda_1, 0 }_t  (r) = 0 | A_t )\Big)  .
\end{multline*}
We now use the fact that the event $ \{ \eta^{\lambda_1, 0}_t ( r) = 1 \} $ is conditionally independent of $\{  \eta^{\lambda_2, 0}_t ( -r) = 1\} ,$ conditionally on $  \{ \eta^{\lambda_1, 0}_t ( -r) = 0 \}.$ Thus 
$$ \P_{\lambda_1, \lambda_2 } ( \eta^{\lambda_1, 0}_t ( r) = 1 | A_t ) = \P ( \eta^{\lambda_1, 0}_t ( r) = 1 | \eta^{\lambda_1 , 0 }_t (-r) = 0  ) .$$
Analogously,  
$$ \P_{\lambda_1, \lambda_2 } (\eta^{ \lambda_1, 0 }_t  (r) = 0| A_t ) = \P ( \eta^{\lambda_1, 0}_t ( r) = 0 | \eta^{\lambda_1 , 0 }_t (-r) = 0  ) ,$$
implying that 
\begin{multline*}
\Delta_{p,q} (\lambda_1, \lambda_2,r, t  ) \geq  2 \P_{\lambda_1, \lambda_2 } ( A_t) \Big(  [ f(2) - f(1) ] \P ( \eta^{\lambda_1, 0}_t ( r) = 1 | \eta^{\lambda_1 , 0 }_t (-r) = 0  ) \\
+ f( 2) \P ( \eta^{\lambda_1, 0}_t ( r) = 0 | \eta^{\lambda_1 , 0 }_t (-r) = 0  ) \Big).
\end{multline*}
Now, if  $ \lambda_1 < \lambda_c, $ we have that 
$$ \P ( \eta^{\lambda_1, 0}_t ( r) = 1;  \eta^{\lambda_1 , 0 }_t (-r) = 0) \to 0 $$
and 
$$ \P ( \eta^{\lambda_1, 0}_t ( r) = 0 ;  \eta^{\lambda_1 , 0 }_t (-r) = 0  ) \to 1$$
as $ t \to \infty .$ Therefore,  there exists $ t_* $ depending on $ f(2) , f(1) $ and on $r,$ such that for all $t \geq t^*, $ 
\begin{multline*}
  [ f(2) - f(1) ] \P ( \eta^{\lambda_1, 0}_t ( r) = 1 | \eta^{\lambda_1 , 0 }_t (-r) = 0  ) 
\\
+ f( 2) \P ( \eta^{\lambda_1, 0}_t ( r) = 0 | \eta^{\lambda_1 , 0 }_t (-r) = 0  ) > 0 .
\end{multline*}
Since $ \P_{\lambda_1, \lambda_2 } ( A_t) > 0 $ for all finite $t,$ this implies the first assertion in the subcritical case $ \lambda_1 < \lambda_2 < \lambda_c .$ 

Let us now consider values of $ \lambda_1 $ which are slightly above $\lambda_c.$ 
Relying on Theorem \ref{theo:liggett}, we have 
\begin{multline}\label{eq:first}
\lim_{ t \to \infty} \P ( \eta^{\lambda_1, 0}_t ( r) = 1 | \eta^{\lambda_1, 0}_t ( -r) = 0 )  \\
=   \frac{ \varrho ( \lambda_1 )  \nu_{\lambda_1} ( \{ B : B \cap \{ - r \} = 0, B \cap \{ r\} = 1 \}  ) }{ ( 1 - \varrho ( \lambda_1) ) + \varrho ( \lambda_1) \nu_{\lambda_1} ( \{ B : B \cap \{ - r \} = 0 \} ) } 
\end{multline}
and 
\begin{multline}\label{eq:second}
\lim_{ t \to \infty}  \P ( \eta^{ \lambda_1, 0 }_t  (r) = 0 | \eta^{\lambda_1, 0}_t (-r) = 0  ) \\
= 
 \frac{( 1 - \varrho ( \lambda_1) ) + \varrho ( \lambda_1) \nu_{\lambda_1} ( \{ B : B \cap \{ \pm  r \} = 0\}) }{( 1 - \varrho ( \lambda_1) ) + \varrho ( \lambda_1) \nu_{\lambda_1} ( \{ B : B \cap \{ -  r \} = 0 )\} ) } .
\end{multline} 
By Theorem \ref{theo:liggett},  $ \varrho ( \lambda) \downarrow 0 $ as $ \lambda \downarrow \lambda_c .$ 
Observe that 
$$\nu_{\lambda} ( \{ B : B \cap \{   r \} = 1 )\} )  =  \varrho ( \lambda ). $$ 
Using \eqref{eq:expdecay} we deduce that 
\begin{equation}\label{eq:third}
\lim_{r \to \infty } \nu_{\lambda_1} ( \{ B : B \cap \{ - r \} = 0, B \cap \{ r\} = 1 \}  ) = \varrho ( \lambda_1) (1 - \varrho ( \lambda_1)) ,
\end{equation}
with analogous formulas for the other terms appearing in \eqref{eq:first} and \eqref{eq:second} above.  

Therefore, fix some $ \varepsilon > 0 $ and choose $ \lambda_1 (p ) > \lambda_c $ sufficiently close to $ \lambda_c $ such that 
\begin{multline*}
f(2) \frac{( 1 - \varrho ( \lambda_1) ) + \varrho ( \lambda_1) ( 1 -\varrho (\lambda_1))^2  }{( 1 - \varrho ( \lambda_1) ) + \varrho ( \lambda_1) (1- \varrho (\lambda_1))    } \\
-   | f(2) - f(1) |  \frac{ \varrho ( \lambda_1 )  \varrho ( \lambda_1) ( 1 - \varrho  ( \lambda_1) ) }{ ( 1 - \varrho ( \lambda_1) ) + \varrho ( \lambda_1) ( 1 - \varrho  (\lambda_1))  }   
\geq  \varepsilon 
\end{multline*}
for all $ \lambda_c \le \lambda_1 \le  \lambda_1 (p)  .$ 

Now, fix any $ \lambda_1 \in [ \lambda_c,  \lambda_1 (p ) ].$ Thanks to the above convergence results \eqref{eq:first}--\eqref{eq:third}, it is possible to choose first $r^* $ and then $t^*  $ such that for all $ r \geq r^* $ and $t \geq t^*, $   
\begin{multline*}
 f(2) \P ( \eta^{ \lambda_1, 0 }_t  (r) = 0 | \eta^{\lambda_1, 0}_t (-r) = 0  )  - \\
 | f(2) - f(1) | \P ( \eta^{\lambda_1, 0}_t ( r) = 1 | \eta^{\lambda_1, 0}_t ( -r) = 0 ) 
\geq  
 \frac{\varepsilon  }{2},
\end{multline*}
implying that 
$$ \Delta_{p,q} (\lambda_1, \lambda_2, r, t  ) \geq   \varepsilon \;    \P_{\lambda_1, \lambda_2 } ( A_t) > 0 .$$
This concludes the proof of  the first assertion.

{\bf Step 2.} We finally consider the case where $\lambda_1 $ is sufficiently larger than $\lambda_c.$ Let $Q$ be the monotone coupling between $ \nu_{\lambda_1}$ and $\nu_{\lambda_2 }$ induced by the construction of Proposition \ref{prop:coup}. Using this coupling and the fact that $ f(0 ) = 0,$ we obtain thanks to Theorem \ref{theo:liggett} that 
\begin{multline}
\lim_{t \to \infty } \Delta_{p,q} (\lambda_1, \lambda_2,r, t  ) \\
= \varrho ( \lambda_2)  \int_{\PZ ( \Z) }  f( | B \cap \Lambda |)  \nu_{\lambda_2} ( dB) - \varrho ( \lambda_1) 
 \int_{\PZ ( \Z) }  f( | B \cap \Lambda |)  \nu_{\lambda_1} ( dB)\\
= \varrho ( \lambda_1 )   \int \int 
\left( f( |B_2  \cap \Lambda_r |) - f ( 
|B_1  \cap \Lambda_r | ) \right)    Q ( d B_1, d B_2 ) \\
+ ( \varrho ( \lambda_2 ) - \varrho ( \lambda_1 ) )  \int_{\PZ ( \Z) }  f( | B \cap \Lambda |)  \nu_{\lambda_2} ( dB) \\
=: T_1 (r) + T_2 (r)  .
\end{multline}
We want to show that this expression is negative for sufficiently large values of $ \lambda_1 $ and  $r.$
We put $ \varepsilon := 2 - p - q  . $ Since by assumption $ q > p \geq \frac23, $ we have $ 2 ( 1 - \varepsilon ) > \varepsilon $ (this will be important in \eqref{eq:imp} below).

Then $ f(2) - f(1) = -( 1 - \varepsilon)  f(1) $ and $ f(2) = \varepsilon  f(1) .$  Writing for short 
$$ Q (r, n, m ) := Q (\{ (B_1, B_2 ) : |B_1 \cap \Lambda_r |  = n, |B_2 \cap \Lambda_r  | = m \} ) , $$ 
it is clear that 
\begin{eqnarray*}
 T_1 (r)  &=& \varrho ( \lambda_1) \left[ (f(2) - f(1) ) Q(r, 1, 2) + f(2) Q(r, 0, 2 ) + f(1) Q(r, 0, 1 )\right] \\
 &=&  \varrho ( \lambda_1) f(1) \left[ -(1- \varepsilon) Q(r, 1, 2 ) + \varepsilon Q(r, 0, 2 ) + Q(r, 0, 1 ) \right] 
.  
\end{eqnarray*}
Applying the last item of Theorem \ref{theo:liggett}, we have that 
\begin{multline*}
\lim_{r \to \infty } T_1 (r) =  \varrho ( \lambda_1) f(1) \Big(  -2 (1- \varepsilon) (\varrho ( \lambda_2) - \varrho ( \lambda_1 ) )\varrho ( \lambda_1)  \\
+ \varepsilon  (\varrho ( \lambda_2) - \varrho ( \lambda_1 ) )^2 +
2  (\varrho ( \lambda_2) - \varrho ( \lambda_1 ) ) (1- \varrho ( \lambda_2) ) \Big) \\
= \varrho ( \lambda_1) f(1)   (\varrho ( \lambda_2) - \varrho ( \lambda_1 ) ) \Big(  -2 (1- \varepsilon) \varrho ( \lambda_1) + \varepsilon  (\varrho ( \lambda_2) - \varrho ( \lambda_1 ) ) \\
 +
2  (1- \varrho ( \lambda_2) ) \Big) .
\end{multline*}
 
Moreover, 
$$  \lim_{r \to \infty } T_2 (r)  = ( \varrho ( \lambda_2 ) - \varrho ( \lambda_1 ) )  \left[ 2 f(1) \varrho ( \lambda_2 ) (1- \varrho ( \lambda_2) ) + f(2) \varrho (\lambda_2)^2  \right] .$$ 
Putting these results together, we conclude that 
\begin{multline}
\lim_{r \to \infty } \lim_{t \to \infty } \Delta_{p,q} (\lambda_1, \lambda_2,r, t  ) = ( \varrho ( \lambda_2 ) - \varrho ( \lambda_1 ) ) f(1) \\
\Big\{  -2 (1- \varepsilon) (\varrho ( \lambda_1))^2 + \varepsilon  \varrho ( \lambda_1)  (\varrho ( \lambda_2) - \varrho ( \lambda_1 ) )  +
2  (1- \varrho ( \lambda_2) ) \varrho ( \lambda_1)  \\
+ 2  \varrho ( \lambda_2 ) (1- \varrho ( \lambda_2) ) + \varepsilon \varrho (\lambda_2)^2  
\Big\} .
\end{multline}

Since $ 2 ( 1 - \varepsilon ) > \varepsilon, $ it is possible to choose $ \delta^*$ such that for all $ \delta \le \delta^* , $
\begin{equation}\label{eq:imp}
2 ( 1 - \varepsilon) (1- \delta)^2 - \varepsilon ( 1- \delta) \delta - 4 \delta - \varepsilon ( 1 - \delta)^2 \geq \kappa > 0 ,
\end{equation}
for some (sufficiently small) $ \kappa > 0 .$  Recall that $ \varepsilon  = \varepsilon ( p) .$ Since $ \lim_{\lambda \uparrow \infty } \varrho ( \lambda )  = 1,$ 
we may choose $ \lambda_2 (p ) $ sufficiently large such that $\varrho ( \lambda_1 ) \geq 1 - \delta^* $ for all $ \lambda_1 \geq \lambda_2 (p) .$ 
As a consequence, for all $ \lambda_2 \geq  \lambda_1 \geq \lambda_2 (p) ,$
\begin{equation}
\lim_{r \to \infty} \lim_{t \to \infty }  \Delta_{p,q} (\lambda_1, \lambda_2,r, t  ) \geq \kappa > 0 ,
\end{equation}
which implies the assertion.
\end{proof}

\section{Final discussion}\label{sec:finaldiscussion}

In the present article, we have proved that for the contact process the information transmission -- as defined in \eqref{eq:dynrange} and \eqref{eq:sensitivitydiff} -- is maximized at a value of the control parameter $\lambda $ which is strictly larger than the critical value $\lambda_c .$ Similar issues have been discussed by many other authors, using different measures of information transmission and considering different models. In the present section, we give an overview of these results and compare our findings to the ones already established in the literature. 

A commonly used measure to quantify  information transfer is a recent  information theoretic measure introduced by \cite{schreiber}, the so-called {\it transfer entropy}. This {\it transfer entropy} quantifies ``the statistical coherence between systems evolving in time''  (cf. \cite{schreiber}). It is ``able to distinguish driving and responding elements and to detect asymmetry in the coupling of subsystems'' (cf. \cite{schreiber}).  An important point is that this quantity measures ``to which extent the individual components contribute to information production and at what rate they exchange information among each other'', when an external perturbation is absent (cf. \cite{schreiber}). 

In the case of ferromagnetic Ising models, \cite{Barnett} show numerically that this transfer entropy is maximized in the disordered phase, that is, in the region where only one invariant measure exists and which would correspond to the subcritical regime for the contact process. This result is confirmed by the findings of \cite{entropy2}. \cite{Barnett} argue that their result could be related to a subtle interplay between sites within and out  the boundaries of same spin domains  whose probability distributions are a function of the temperature. 
On the other hand, \cite{entropy} consider a Susceptible-Infected-Susceptible (SIS) epidemic model on a homogeneous network and provide simulations showing that the transfer entropy is maximized in the supercritical regime, confirming our result. They argue that ``once the disease dynamics reach criticality, we observe strong effects of one individual on a connected neighbor (measured by the transfer entropy). However, as the dynamics become supercritical, the target neighbor becomes more strongly bound to all of its neighbors collectively, and it becomes more difficult to predict its dynamics based on a single source neighbor alone; as such, the transfer entropy begins to decrease'' (see \cite{entropy}). These results are very close to the ones we have found in the present paper for the contact process. 

Our paper presents two main differences with respect to the above cited ones. First of all, to the best of our knowledge, our result is the first one available using analytical methods instead of numerical simulations. The second difference is that instead of relying on the transfer entropy, we  measure how much a system discriminates between different external inputs to which the system is initially exposed.  To do so, we have introduced the notion of  {\it sensitivity} of the model with respect to the initial signal, given by the total variation distance between the two associated processes at a given single site, at some fixed time. 

Let us briefly comment on this choice. Two points of view are commonly adopted to describe the influence of external stimuli in neuronal systems. On the one hand, one might think of external stimuli which are permanently influencing the system, acting as external field. This is the point of view adopted by \cite{KinouchiCopelli}. The second approach is to think of an initial configuration, coming  from another region of the brain, which is exposed as in initial stimulus to the region one is interested in, and to see how this initial stimulus is propagated by the system. This is the point of view we have adopted in the present paper, leading to our definition of {\it sensitivity}.  

Although our measure of information transfer is different from those used by \cite{Barnett}, \cite{entropy} and also \cite{KinouchiCopelli}, the fact that it is maximized in the supercritical region, that is, in the ordered phase where several invariant measures coexist, is due to the very nature of the system we consider (in the very same way as what was observed in \cite{entropy}). This is due to the fact that both the SIS epidemics model as well as the contact process describe the evolution and the spread of an epidemics. More precisely, our result can be related to the specific features of the stationary states of our model where $\varrho ( \lambda) $  is strictly larger than zero only for $ \lambda > \lambda_c $ (cf. Theorem \ref{theo:liggett}). In other words, to convey, at large times, a non  trivial  amount of information, $\lambda$ has to be larger than $ \lambda_c.$  On the contrary to these results, in the case of the 2d Ising model, \cite{Barnett} observe a peak on the disordered site, that is, in the region, where only one invariant measure exists. This result is characteristic of the very nature of the Ising model and shows that in terms of its information theoretic structure, the Ising model displays different features than the contact process or any other model of epidemics spread.

\section*{Acknowledgements}
Many thanks to Errico Presutti and  Antonio Carlos Roque da Silva Filho for stimulating discussions about this subject. We also thank two anonymous referees for helpful comments and suggestions.
We thank the Gran Sasso Science Institute (GSSI) for hospitality and support. 
This research has been conducted as part of the project Labex MME-DII (ANR11-LBX-0023-01), USP project {\em Mathematics, computation, language
and the brain} and FAPESP project {\em Research, Innovation and
Dissemination Center for Neuromathematics} (grant 2013/07699-0). AG is partially supported  by CNPq fellowship (grant 311 719/2016-3.)

\nocite{harris1978}
\nocite{KinouchiCopelli}
%\nocite{Larremore}
%\nocite{BeggsPlenz} 
\bibliography{biblio}{}
\bibliographystyle{plain}

\end{document}